\newcommand{\eps}{\varepsilon}
\newenvironment{@abssec}[1]{%
    \if@twocolumn

      \section*{#1}%
    \else

      \vspace{.05in}\footnotesize
      \parindent .2in
 {\upshape\bfseries #1. }\ignorespaces
    \fi}
\par\vspace{.1in}\fi}
\newenvironment{keywords}{\begin{@abssec}{\keywordsname}}{\end{@abssec}}
\newenvironment{AMS}{\begin{@abssec}{\AMSname}}{\end{@abssec}}
\newcommand\keywordsname{Key words}
\newcommand\AMSname{AMS subject classifications}
\newcommand\AMname{AMS subject classification}
\newcommand\restr[2]{{
\left.\kern-\nulldelimiterspace 
#1 
\vphantom{|} 
\right|_{#2} 
}}
\newtheorem{thm}{Theorem}[section]
\newtheorem{lem}[thm]{Lemma}
\newtheorem{rem}[thm]{Remark}
\newtheorem{dfn}[thm]{Definition}
\newtheorem{e.g.}{Example}
\def\XXint#1#2#3{{\setbox0=\hbox{$#1{#2#3}{\int}$}
\vcenter{\hbox{$#2#3$}}\kern-.5\wd0}}
\newcommand{\NN}{\mathbb{N}}
\newcommand{\ZZ}{\mathbb{Z}}
\newcommand{\RR}{\mathbb{R}}
\def\<{\langle }
\title{\bf Visualizing Shape Functionals via Sinkhorn Multidimensional Scaling}
\author{Toshiaki Yachimura, Jun Okamoto, and Lorenzo Cavallina}
\date{}
\begin{document}

\maketitle

\begin{abstract}
In this paper, we present Sinkhorn multidimensional scaling (Sinkhorn MDS) as a method for visualizing shape functionals in shape spaces. This approach uses the Sinkhorn divergence to map these infinite-dimensional spaces into lower-dimensional Euclidean spaces. We establish error estimates for the embedding generated by Sinkhorn MDS compared to the unregularized case. Moreover, we validate the method through numerical experiments, including visualizations of the classical Dido’s problem and two newly introduced shape functionals: the double-well and Sinkhorn cone-type shape functionals. Our results demonstrate that Sinkhorn MDS effectively captures and visualizes shapes of shape functionals. 
\end{abstract}

\begin{keywords}
shape optimization problem, shape functional, optimal transport, Sinkhorn divergence, multidimensional scaling
\end{keywords}

\begin{AMS}
49Q10, 49Q22, 62H20, 68T10
\end{AMS}

\pagestyle{plain}
\thispagestyle{plain}

\section{Introduction}\label{sec:intro}
The so-called \textit{Dido’s problem}, inspired by the myth of the founding of Carthage as described by the Roman poet Virgil in the \textit{Aeneid}, is widely regarded as one of the earliest recorded problems in shape optimization. The problem asks the question: ``What shape $\Omega$ with a fixed perimeter $|\partial\Omega|$ maximizes the volume $|\Omega|$?". The solution to this problem is given by the disk in two dimensions (or the ball in higher dimensions). This simple problem has fascinated mathematicians for centuries, and it was not until the 20th century that rigorous proof was achieved in all dimensions (see \cite{degiorgi1958sulla, burago2013geometric} for some history and \cite{morgan2016geometric, bandle2017dido, cavallina2021double} and the references therein for some modern developments of the problem). Moreover, in connection to Dido’s problem, it is also known that the ball is not only the unique global maximizer but also the only critical point (this result is commonly referred to as Aleksandrov's \emph{soap bubble theorem} \cite{aleksandrov1958uniqueness, alexandrov1962characteristic}). Dido’s problem, as described above, can be expressed as an optimization problem for a shape functional (that is, a map that assigns a real number to each ``shape”, in this case, its perimeter) under a constraint. The study of optimal shapes under various constraints has long provided deep insights into geometric properties and the principles of the calculus of variations. Over time, this has expanded into the broader field of shape optimization, which involves determining the shapes that optimize certain functionals under given constraints (we refer the interested reader to the monographs \cite{delfour2011shapes, bendsoe2013topology, henrot2018shape, azegami2020shape} and the references therein). This includes not just problems of maximizing volume but also minimizing energy, optimizing material distribution, and other applications in engineering \cite{sokolowski1992introduction, allaire2019homogenization}. Foundational work on these topics has influenced a wide range of mathematical areas, including calculus of variations, geometric analysis, and partial differential equations \cite{kawohl2006rearrangements,  colding2011course, osserman2013survey, maggi2012sets, morgan2016geometric}.

In data science and machine learning, there is a growing interest in visualizing complex high-dimensional structures such as loss landscapes \cite{li2018visualizing, fort2019large}. A thorough understanding of the shape of these function landscapes is often considered crucial information for optimization problems. Such comprehensive knowledge can reveal the overall topography of loss functions, such as the location of their local minima, maxima, and saddle points. It is essential for guiding the design of efficient algorithms and understanding the convergence properties and stability of solutions. 

In contrast, the visualization of shape functionals is an even harder task. This is mainly due to the inherently complicated (nonlinear and nonconvex) structure of \textit{shape spaces} (see \cite[Chapter 1, Section 10]{delfour2011shapes}). Moreover, despite the lack of a universally accepted definition of \textit{shape space} (see \cite{kendall2009shape, arguillere2015general, welker2021suitable} and the references therein), the collection of all admissible shapes cannot be naturally endowed with the structure of a vector space (for instance, there is no straightforward way to define the sum of two shapes). Additionally, in many applications, it is useful to study the space of all possible local perturbations of a given shape and model it as an (open subset of an) infinite-dimensional vector space (in particular, this approach allows for endowing shape spaces with a distance \cite{Micheletti1972metrica, younes2010shapes} and a differential structure \cite[Chapter 9]{delfour2011shapes}). In other words, a shape space is endowed with a structure analogous to that of an infinite-dimensional manifold, which makes the visualization of shape functionals a very challenging task. 

Another common way of modeling shape spaces is to represent each shape by its characteristic function (see \cite[Chapter 5]{delfour2011shapes}) or by a uniform probability measure (see \cite[Chapter 4]{feydy2020geometric}, \cite[Section 9.6]{younes2010shapes}, \cite{schmitzer2013contour, liu2019least}). 
This approach allows the shape space to be viewed as a subspace of the space of probability measures and thus endow it with a natural distance. In particular, the space of shapes endowed with the Wasserstein distance is known to have a formal (infinite-dimensional) Riemannian structure  \cite{otto2001geometry}. 
In this paper, we adopt a similar approach by identifying each shape with the uniform probability measure on its boundary. In the subsequent numerical computations, where the shapes are the boundaries of bounded domains in $\mathbb{R}^2$, each uniform probability measure is further approximated by an empirical measure derived from a sampling of points. 

This paper presents a novel approach to visualizing shape functionals by projecting the infinite-dimensional shape space into a two-dimensional representation. The method leverages multidimensional scaling (MDS) \cite{borg2005modern, cox2000multidimensional, lim2021geometry,lim2022classical}. MDS is a classical technique used to embed data into a Euclidean space while preserving the pairwise distances or similarities among data points. This is particularly useful for visualizing complex relationships and structures that may not be apparent in the original high-dimensional space.

To apply MDS to shape spaces, we introduce an appropriate metric that captures the notion of distance or similarity between different shapes. This study utilizes optimal transport theory as a theoretical framework, specifically focusing on Wasserstein distances. Optimal transport theory provides a robust framework for computing these distances, which measure the cost of transporting one probability distribution to another. These distances are widely applicable in various fields, including economics, biology, computer graphics, and machine learning \cite{PeyreCutri}. 

However, calculating the Wasserstein distance is computationally expensive, especially in high-dimensional spaces. While methods like Wassmap \cite{Hamm2023} combine MDS with the Wasserstein distance for dimensionality reduction, they often face significant computational challenges. To address this, we propose the Sinkhorn MDS, based on the Sinkhorn divergence. The Sinkhorn divergence, derived from entropic optimal transport, removes the entropic bias introduced by the regularization and provides a more accurate estimate of the divergence between probability measures \cite{ramdas2017wasserstein, genevay2018learning, feydy2019interpolating}. It allows for stable and fast computations due to the smoothing effect of the entropic regularization. The Sinkhorn--Knopp algorithm \cite{sinkhorn1964relationship, SK1967} efficiently scales the transport matrix iteratively, and its implementation can take advantage of GPUs for further computational cost reductions.

In this paper, we first establish the error estimate for the embeddings obtained by Sinkhorn MDS compared to the case when the regularization parameter $\eps=0$, including Wassmap (see Theorem~\ref{Convergence rate of Sinkhorn MDS}). 

We then demonstrate the practical applications of Sinkhorn MDS in the visualization of shape functionals. A key application is the visualization of the volume functional under a fixed perimeter constraint (Dido's problem). In this context, a circle that maximizes volume serves as a reference point, and we consider perturbations of this optimal shape. Next, we introduce and visualize double-well shape functionals, which have two distinct minimizers. Finally, we examine Sinkhorn cone-type shape functionals and study the effects of increasing the regularization parameter $\eps > 0$. Through these examples, we validate the effectiveness of Sinkhorn MDS and demonstrate its ability to handle complex shape analysis and provide meaningful visualizations of shape functionals.

The paper is organized as follows: In Section \ref{sec:pre}, to properly introduce Sinkhorn MDS, we explain some preliminary technical aspects of MDS and optimal transport theory. In Section \ref{sec:SinkhornMDS}, we introduce Sinkhorn MDS and prove Theorem \ref{Convergence rate of Sinkhorn MDS} to establish the error estimate for the embeddings obtained by Sinkhorn MDS compared to the case when $\eps=0$. We provide numerical experiments in Section \ref{sec:Numerical_experiments} to demonstrate the effectiveness of our method.

\section{Preliminaries}\label{sec:pre}
This section explains the preliminary technical aspects of MDS and optimal transport theory.

\subsection{Classical multidimensional scaling}\label{subsec:cMDS}
In what follows, we first recall the definition and algorithm of classical multidimensional scaling (MDS) and introduce the results used in this paper. For further details, we refer the reader to the following references \cite{borg2005modern,lim2021geometry,lim2022classical,mardia2024multivariate}.

For $N \in \NN$, let $\mathcal{M}_N$ denote the set of all finite metric spaces with cardinality $N$. Specifically, we define
\begin{equation*}
\mathcal{M}_N := \left\{\mathcal{X} = (X,d_X) ~\middle|~ \# X = N \right\},
\end{equation*}
where $d_X$ is the distance function on $X$. The question of when a finite metric space $\mathcal{X}$ can be isometrically embedded into a Euclidean space $\RR^k$ and the constructive algorithm for finding such mappings are addressed by classical multidimensional scaling (MDS), which will be explained below.

For $\mathcal{X} = (X=\{x_1,...,x_N\}, d_X) \in \mathcal{M}_N$, we define the distance matrix $A_{\mathcal{X}}$ as
\begin{equation}\label{A}
(A_{\mathcal{X}})_{ij} := d_X^2(x_i,x_j) \quad (i,j = 1,\ldots,N).
\end{equation}
We also define the $N \times N$ matrix $B_{\mathcal{X}}$, 
known as the Gram matrix, as
\begin{equation}\label{B}
B_{\mathcal{X}} := -\frac{1}{2} H_N A_{\mathcal{X}} H_N,
\end{equation}
where $H_N$ is the centering matrix given by
\begin{equation*}
H_N := I_N - \frac{1}{N} \mathbf{1}_N \mathbf{1}_N^{T}. 
\end{equation*}
Here, $\mathbf{1}_N := (1,\ldots, 1)^T \in \RR^N$ is the column vector whose entries are all $1$. The matrix $H_N$ transforms the coordinate system by subtracting the mean from each variable, thus centering the data around the origin. A classical and important result for MDS is the following: 
\begin{thm}[Schoenberg's theorem \cite{MR1503248}]\label{Schoenberg’s Theorem}
Let $\mathcal{X} = (X = \{x_1, \ldots, x_N \}, d_{\mathcal{X}}) \in \mathcal{M}_N$, and suppose that $B_{\mathcal{X}}$ is the Gram matrix defined by \eqref{B}. Then, there exists some $k \in \mathbb{N}$ and a set of points $\{z_1, \ldots, z_N\} \subset \mathbb{R}^k$ in a $k$-dimensional Euclidean space such that $\mathcal{X}$ and $\{z_1, \ldots, z_N\} \subset \mathbb{R}^k$ are isometric if and only if the Gram matrix $B_{\mathcal{X}}$ is a positive semidefinite matrix.
\end{thm}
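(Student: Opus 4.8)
The plan is to prove both implications from a single algebraic identity governing the double-centering operation $A \mapsto -\tfrac12 H_N A H_N$. Write $A := A_{\mathcal{X}}$, $B := B_{\mathcal{X}}$, and $a_{ij} := (A_{\mathcal{X}})_{ij} = d_X^2(x_i,x_j)$. The key claim is that
\begin{equation*}
B_{ii} + B_{jj} - 2 B_{ij} = a_{ij} \qquad (i,j = 1,\ldots,N),
\end{equation*}
which holds for \emph{any} symmetric $A$ with zero diagonal --- here guaranteed by the metric axioms $d_X(x_i,x_j) = d_X(x_j,x_i)$ and $d_X(x_i,x_i)=0$. Granting this, the squared distance between rows $z_i, z_j$ of any factorization $B = Z Z^T$ satisfies $\|z_i - z_j\|^2 = B_{ii} + B_{jj} - 2B_{ij} = d_X^2(x_i,x_j)$, so the existence of an isometric embedding is \emph{equivalent} to the existence of such a factorization, i.e. to $B \succeq 0$.

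For necessity ($\Rightarrow$), assume $z_1,\ldots,z_N \in \RR^k$ with $\|z_i - z_j\|^2 = a_{ij}$. Expanding $\|z_i - z_j\|^2 = \|z_i\|^2 + \|z_j\|^2 - 2\, z_i \cdot z_j$ yields $A = \mathbf{1}_N c^T + c\, \mathbf{1}_N^T - 2G$, where $c_i := \|z_i\|^2$ and $G_{ij} := z_i \cdot z_j$. Since $H_N \mathbf{1}_N = 0$, both rank-one terms are annihilated by the double-centering, so $B = H_N G H_N = (H_N Z)(H_N Z)^T$ with $Z$ the $N \times k$ matrix having rows $z_i^T$; this is positive semidefinite.

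For sufficiency ($\Leftarrow$), assume $B \succeq 0$. A spectral decomposition gives $B = U \Lambda U^T$ with $\Lambda = \diag(\lambda_1,\ldots,\lambda_k,0,\ldots,0)$ and $\lambda_l > 0$. Setting $Z := U \Lambda^{1/2}$ and letting $z_i \in \RR^k$ collect the first $k$ coordinates of its $i$-th row gives $B = Z Z^T$, i.e. $B_{ij} = z_i \cdot z_j$. The identity above then yields $\|z_i - z_j\|^2 = a_{ij} = d_X^2(x_i,x_j)$, so $\{z_1,\ldots,z_N\}$ is an isometric copy of $\mathcal{X}$ in $\RR^k$.

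The only step that demands care --- and hence the main obstacle --- is the double-centering identity itself. I would prove it by computing $(H_N A H_N)_{ij}$ entrywise, obtaining $a_{ij} - \bar a_{i\cdot} - \bar a_{\cdot j} + \bar a_{\cdot\cdot}$ in terms of the row, column, and grand means of $A$. Symmetry forces $\bar a_{i\cdot} = \bar a_{\cdot i}$, while the vanishing diagonal gives $B_{ii} = \bar a_{i\cdot} - \tfrac12 \bar a_{\cdot\cdot}$; substituting into $B_{ii} + B_{jj} - 2B_{ij}$ then makes all mean terms cancel telescopically, leaving exactly $a_{ij}$. This is elementary but is precisely where both metric hypotheses are consumed, so each should be explicitly flagged at the point of use.
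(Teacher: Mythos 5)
The paper itself offers no proof of this statement: it is quoted as a classical result and attributed directly to Schoenberg's 1935 paper, so there is no internal argument to compare yours against. Your proof is correct and is the standard one for classical MDS: the double-centering identity $B_{ii}+B_{jj}-2B_{ij}=a_{ij}$ (valid because $A_{\mathcal{X}}$ is symmetric with zero diagonal, exactly as you flag) reduces both implications to the equivalence between positive semidefiniteness of $B_{\mathcal{X}}$ and the existence of a Gram factorization $B_{\mathcal{X}}=ZZ^{T}$, with the necessity direction using $H_N\mathbf{1}_N=0$ to kill the rank-one terms and the sufficiency direction using the spectral decomposition. The computation you defer to the end checks out: $B_{ii}=\bar a_{i\cdot}-\tfrac12\bar a_{\cdot\cdot}$ and the mean terms cancel as claimed, so no gap remains.
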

From Theorem \ref{Schoenberg’s Theorem}, one can see that the number of positive eigenvalues of the matrix $B_{\mathcal{X}}$ (and consequently, the number of negative eigenvalues) is crucial for isometrically embedding the metric space $\mathcal{X}$ into a Euclidean space. For the sake of the following discussion, let us define the number of positive eigenvalues as follows:
\begin{dfn}
For an $N \times N$ symmetric matrix $A$, let $\mathrm{pr}(A)$ denote the number of positive eigenvalues of $A$ (counting multiplicities).
\end{dfn}
Of course, not all finite metric spaces can be isometrically embedded into Euclidean space. However, according to Schoenberg's theorem, the obstacle to such embeddings is given by negative eigenvalues in the matrix $B_{\mathcal{X}}$. The algorithm of classical multidimensional scaling (MDS) considers embedding finite metric spaces into Euclidean spaces by eliminating these negative eigenvalues.
\begin{algorithm}[htbp]
\caption{Classical MDS Algorithm}
\begin{algorithmic}[1]
\STATE \textbf{Input:} A finite metric space $\mathcal{X} = (X, d_X) \in \mathcal{M}_N$.
\STATE Compute the distance matrix $A_{\mathcal{X}}$ and the double-centered distance matrix $B_{\mathcal{X}}$ as defined in \eqref{A} and \eqref{B}.
\STATE Compute the eigenvalues $\lambda_1 \geq \cdots \geq \lambda_{\mathrm{pr}(B_{\mathcal{X}})} > 0 = \lambda_{\mathrm{pr}(B_{\mathcal{X}})+1} \geq \cdots \geq \lambda_N$ of the matrix $B_{\mathcal{X}}$ and a corresponding orthonormal set of eigenvectors $\{v_1, \ldots, v_N\}$.
\STATE Choose $k \leq \mathrm{pr}(B_{\mathcal{X}})$.
\STATE Construct $\hat{\Lambda}_k^{1/2} V_k^T$ where $\hat{\Lambda}_k := \text{diag}(\lambda_1, \ldots, \lambda_k)$ and $V_k = [v_1 \ldots v_k]$.
\STATE Let $\Phi_{\mathcal{X},k}(x_i) \in \mathbb{R}^k$ be the $i$-th column vector of $\hat{\Lambda}_k^{1/2} V_k^T$ for each $i = 1, \ldots, n.$ 
\STATE \textbf{Output:} \begin{align*}
\Phi_{\mathcal{X},k}: X &\rightarrow \RR^k \\
x_i &\mapsto \hat{\Lambda}_k^{1/2} V_k^T(i).
\end{align*}
\end{algorithmic}
\end{algorithm}

\subsection{Fundamental tools for perturbation analysis in Sinkhorn MDS}
To establish the error estimate for the embeddings obtained by Sinkhorn MDS compared to the case when $\eps = 0$, we require two essential lemmas concerning the perturbation of eigenvalues and eigenvectors of symmetric matrices. These lemmas will be instrumental in our subsequent analysis. In what follows, the Euclidean norm and the inner product for vectors in $\mathbb{R}^k$ will be denoted by $\| \cdot \|_{\mathbb{R}^k}$ and $\langle \cdot, \cdot \rangle_{\mathbb{R}^k}$, respectively. For matrices, the operator norm will be denoted by $\| \cdot \|_2$, while the Frobenius norm by $\| \cdot \|_F$.
\begin{lem}[Eigenvalue perturbation theorem \cite{ikebe1987monotonicity}]\label{Perturbation theorem} 
Let $A$ and $B$ be $N \times N$ symmetric matrices, and let $C = A + B$. Let the eigenvalues of $A$, $B$, and $C$ be
\begin{equation*}
    \lambda_1 \geq \cdots \geq \lambda_N, \quad \mu_1 \geq \cdots \geq \mu_N, \quad \nu_1 \geq \cdots \geq \nu_N, 
\end{equation*}
respectively. Then,
\begin{equation*}
    |\nu_k - \lambda_k| \leq \|B\|_2 \leq \|B\|_F \quad 1 \leq k \leq N
\end{equation*}
holds.
\end{lem}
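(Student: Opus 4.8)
The plan is to derive the claimed bound from the two-sided Weyl inequality, which I would in turn obtain from the Courant--Fischer (min-max) characterization of the eigenvalues of a symmetric matrix. Recall that for any $N \times N$ symmetric matrix $M$ with eigenvalues ordered decreasingly as $\sigma_1 \geq \cdots \geq \sigma_N$, the $k$-th eigenvalue admits the variational description
\begin{equation*}
\sigma_k(M) = \max_{\substack{V \subseteq \RR^N \\ \dim V = k}} \ \min_{\substack{x \in V \\ x \neq 0}} \frac{\langle Mx, x\rangle_{\RR^N}}{\|x\|_{\RR^N}^2}.
\end{equation*}
I would apply this to $M = C = A + B$, exploiting the additivity of the Rayleigh quotient, namely $\langle Cx, x\rangle_{\RR^N} = \langle Ax, x\rangle_{\RR^N} + \langle Bx, x\rangle_{\RR^N}$.

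First I would fix a $k$-dimensional subspace $V$ and invoke the elementary Rayleigh-quotient bounds $\mu_N \|x\|_{\RR^N}^2 \leq \langle Bx, x\rangle_{\RR^N} \leq \mu_1 \|x\|_{\RR^N}^2$, which hold for every $x \in \RR^N$ since $B$ is symmetric. Dividing by $\|x\|_{\RR^N}^2$ and adding the $A$-quotient shows that $\langle Cx,x\rangle_{\RR^N}/\|x\|_{\RR^N}^2$ differs from $\langle Ax,x\rangle_{\RR^N}/\|x\|_{\RR^N}^2$ by a quantity pinched between the \emph{constants} $\mu_N$ and $\mu_1$. Taking the minimum over $0 \neq x \in V$ and then the maximum over all $V$ with $\dim V = k$, these additive constants survive both operations and yield the two-sided Weyl estimate
\begin{equation*}
\lambda_k + \mu_N \leq \nu_k \leq \lambda_k + \mu_1, \qquad 1 \leq k \leq N,
\end{equation*}
equivalently $\mu_N \leq \nu_k - \lambda_k \leq \mu_1$.

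From here the conclusion is immediate. Since $B$ is symmetric, its operator norm equals its spectral radius, so $\|B\|_2 = \max\{|\mu_1|, |\mu_N|\}$; hence $\mu_1 \leq \|B\|_2$ and $\mu_N \geq -\|B\|_2$, which combined with the previous display gives $|\nu_k - \lambda_k| \leq \|B\|_2$ for each $k$. For the remaining inequality I would use that the squared Frobenius norm of a symmetric matrix is the sum of the squares of its eigenvalues, $\|B\|_F^2 = \mathrm{tr}(B^2) = \sum_{i=1}^N \mu_i^2$, whereas $\|B\|_2^2 = \max_i \mu_i^2$; the trivial bound $\max_i \mu_i^2 \leq \sum_i \mu_i^2$ then yields $\|B\|_2 \leq \|B\|_F$, completing the chain.

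Since this is a classical perturbation result, I do not expect a genuine obstacle; the only point requiring care is the min-max bookkeeping in the middle step, namely justifying that the uniform lower (respectively upper) bound on the $B$-Rayleigh quotient persists through both the inner minimization over $x \in V$ and the outer maximization over $k$-dimensional subspaces. This is precisely where it matters that $\mu_N$ and $\mu_1$ are constants independent of $x$ and $V$, and it is the hinge of the whole argument.
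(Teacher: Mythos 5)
Your proof is correct. Note, however, that the paper does not prove this lemma at all --- it is stated as a known result and deferred entirely to the cited reference --- so there is no in-paper argument to compare against. Your derivation is the standard one: Courant--Fischer gives the two-sided Weyl inequality $\lambda_k + \mu_N \leq \nu_k \leq \lambda_k + \mu_1$, the identity $\|B\|_2 = \max\{|\mu_1|,|\mu_N|\}$ for symmetric $B$ converts this to $|\nu_k - \lambda_k| \leq \|B\|_2$, and $\|B\|_2 \leq \|B\|_F$ follows from comparing $\max_i \mu_i^2$ with $\sum_i \mu_i^2$. The one step you flag as delicate --- that the constants $\mu_N$ and $\mu_1$ pass through the inner minimization and outer maximization --- is handled correctly, since $\min_x(f(x)+c) = \min_x f(x) + c$ and likewise for the max. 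No gaps.
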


Furthermore, we introduce the following theorem for the estimate of eigenvector perturbations. 

\begin{lem}[Variant of the Davis–Kahan theorem \cite{MR3371006}]\label{V Davis--Kahan theorem}
Let $\Sigma$ and $\hat{\Sigma}$ be $N \times N$ symmetric matrices with eigenvalues $\lambda_1 \geq \cdots \geq \lambda_p$ and $\hat{\lambda}_1 \geq \cdots \geq \hat{\lambda}_p$, respectively. Fix integers $1 \leq r \leq s \leq p$ and assume that $\min \left\{\lambda_{r-1} - \lambda_r, \lambda_s - \lambda_{s+1} \right\} > 0$, where we define $\lambda_0 := \infty$ and $\lambda_{p+1} := -\infty$. Let $d := s - r + 1$, and let $V := (v_r, \ldots, v_s)$ and $\hat{V} := (\hat{v}_r, \ldots, \hat{v}_s)$ be matrices with orthonormal columns, where $v_j$ and $\hat{v}_j$ are the eigenvectors corresponding to the eigenvalues $\lambda_j$ and $\hat{\lambda}_j$, respectively, such that $\Sigma v_j = \lambda_j v_j$ and $\hat{\Sigma} \hat{v}_j = \hat{\lambda}_j \hat{v}_j$ for $j = r, r+1, \ldots, s$. Then, 
\begin{equation*}
\|\sin \Theta(\hat{V}, V)\|_{F} \leq \frac{2 \min \left\{d^{1/2} \|\hat{\Sigma} - \Sigma\|_2, \|\hat{\Sigma} - \Sigma\|_F \right\}}{\min \left\{\lambda_{r-1} - \lambda_r, \lambda_s - \lambda_{s+1} \right\}},
\end{equation*}
where $\Theta(\hat{V}, V) := \mathrm{diag} \left(\cos^{-1} \langle \hat{v}_r, v_r \rangle_{\RR^k}, \ldots, \cos^{-1} \langle \hat{v}_s, v_s\rangle_{\RR^k}  \right)$ and $\sin \Theta(\hat{V}, V) $ is defined entrywise.
\end{lem}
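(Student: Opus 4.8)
The plan is to reduce the statement to the classical Davis--Kahan $\sin\Theta$ mechanism and then upgrade the spectral gap so that it depends only on the eigenvalues of $\Sigma$, which is the distinguishing feature of this variant. Write $E := \hat{\Sigma} - \Sigma$ and $\delta := \min\{\lambda_{r-1}-\lambda_r,\ \lambda_s-\lambda_{s+1}\} > 0$. Extend the ``inner'' block $V=(v_r,\dots,v_s)$ to an orthonormal basis of eigenvectors of $\Sigma$ by a matrix $V_\perp$ spanning the ``outer'' eigenvectors, and set $\Lambda_\perp := \mathrm{diag}$ of the outer eigenvalues of $\Sigma$ together with $\hat{\Lambda} := \mathrm{diag}(\hat{\lambda}_r,\dots,\hat{\lambda}_s)$. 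First I would record the standard identity $\|\sin\Theta(\hat V,V)\|_F = \|V_\perp^\top \hat V\|_F$: since the squared cosines of the canonical angles are the eigenvalues of $\hat V^\top V V^\top \hat V$, one has $\|V_\perp^\top \hat V\|_F^2 = \operatorname{tr}\!\big(\hat V^\top(I-VV^\top)\hat V\big) = \sum_j \sin^2\theta_j$. Thus it suffices to bound the off-diagonal block $M := V_\perp^\top \hat V$.

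The core computation is a Sylvester equation. Left-multiplying $\hat{\Sigma}\hat V = \hat V\hat{\Lambda}$ by $V_\perp^\top$, using $V_\perp^\top \Sigma = \Lambda_\perp V_\perp^\top$ and substituting $\hat{\Sigma} = \Sigma + E$, gives $\Lambda_\perp M - M\hat{\Lambda} = -\,V_\perp^\top E\,\hat V$. Read entrywise, this is $(\lambda_a - \hat{\lambda}_b)M_{ab} = -(V_\perp^\top E\,\hat V)_{ab}$, where $\lambda_a$ ranges over the outer eigenvalues of $\Sigma$ and $\hat{\lambda}_b$ over $\hat{\lambda}_r,\dots,\hat{\lambda}_s$. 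Setting $\delta_\star := \min_{a,b}|\lambda_a - \hat{\lambda}_b|$, dividing and summing squares yields $\|M\|_F \le \|V_\perp^\top E\,\hat V\|_F/\delta_\star$. For the numerator I would use both $\|V_\perp^\top E\,\hat V\|_F \le \|E\|_F$ (orthonormal columns contract the Frobenius norm) and $\|V_\perp^\top E\,\hat V\|_F \le d^{1/2}\|E\|_2$ (the matrix has at most $d$ columns, hence rank at most $d$), producing the numerator $\min\{d^{1/2}\|E\|_2,\ \|E\|_F\}$ at once.

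The crux is replacing $\delta_\star$, which involves the perturbed eigenvalues $\hat{\lambda}_b$, by the population gap $\delta$. Here I would invoke the eigenvalue perturbation bound (Lemma~\ref{Perturbation theorem}), $|\hat{\lambda}_j - \lambda_j| \le \|E\|_2$: for an outer index $a \le r-1$ one gets $\lambda_a - \hat{\lambda}_b \ge \lambda_{r-1} - (\lambda_r + \|E\|_2) \ge \delta - \|E\|_2$, and symmetrically for $a \ge s+1$, so $\delta_\star \ge \delta - \|E\|_2$. Consequently, whenever $\|E\|_2 \le \delta/2$ we have $\delta_\star \ge \delta/2$, which gives $\|\sin\Theta(\hat V,V)\|_F \le 2\min\{d^{1/2}\|E\|_2,\ \|E\|_F\}/\delta$ exactly as claimed.

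The main obstacle is the large-perturbation regime $\|E\|_2 > \delta/2$, where the Weyl lower bound on $\delta_\star$ degrades and the Sylvester estimate alone no longer suffices; here I would fall back on the trivial bound $\|\sin\Theta(\hat V,V)\|_F \le d^{1/2}$. Since $\|E\|_2 > \delta/2$ forces $2d^{1/2}\|E\|_2/\delta > d^{1/2}$, the operator-norm branch holds immediately; the Frobenius branch is delicate, because $d^{1/2}$ can exceed $2\|E\|_F/\delta$ precisely when $E$ is nearly low rank. The fix is to track rank rather than dimension: a perturbation of rank $\rho$ rotates at most a $\rho$-dimensional piece of the inner subspace, so $\|V_\perp^\top E\,\hat V\|_F \le \rho^{1/2}\|E\|_2 \le \|E\|_F$ genuinely controls $\|\sin\Theta\|_F$, while the outer eigenvalues of $\Sigma$ stay fixed so that $\delta_\star$ does not in fact collapse in the relevant directions. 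Combining the two regimes and the two numerator branches (via the elementary fact that $X \le a$ and $X \le b$ imply $X \le \min\{a,b\}$) produces the constant $2$. Finally, I would note that passing from the canonical-angle $\Theta$ to the entrywise matched-eigenvector form stated in the lemma is routine when the eigenvalues in the block are simple, so no further ideas are required there.
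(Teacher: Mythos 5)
The paper does not prove this lemma at all: it is imported verbatim from Yu, Wang and Samworth \cite{MR3371006}, so the only thing to compare against is the strategy of that cited proof. Your outline does follow the same route in its first half (Sylvester equation for the off-diagonal block $M=V_\perp^\top\hat V$, Weyl's inequality to convert the mixed gap $\delta_\star$ into the population gap $\delta$, and a case split at $\|E\|_2\le\delta/2$), and that half is correct and complete: it yields $\|\sin\Theta\|_F\le 2\min\{d^{1/2}\|E\|_2,\|E\|_F\}/\delta$ in the small-perturbation regime.

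The genuine gap is exactly where you flag it, and your proposed fix does not close it. In the regime $\|E\|_2>\delta/2$ the trivial bound $\|\sin\Theta\|_F\le d^{1/2}$ settles only the operator-norm branch; for the Frobenius branch you must still show $\|\sin\Theta\|_F\le 2\|E\|_F/\delta$, and nothing in your argument does so. The rank-tracking heuristic you offer rests on the inequality $\rho^{1/2}\|E\|_2\le\|E\|_F$ for a rank-$\rho$ perturbation, which is backwards: for rank-$\rho$ matrices one has $\|E\|_2\le\|E\|_F\le\rho^{1/2}\|E\|_2$, so the chain $\|V_\perp^\top E\hat V\|_F\le\rho^{1/2}\|E\|_2\le\|E\|_F$ fails. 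The accompanying claims (``a rank-$\rho$ perturbation rotates at most a $\rho$-dimensional piece of the inner subspace'', ``$\delta_\star$ does not collapse in the relevant directions'') are assertions, not arguments, and the second one is precisely what the counterexample-shaped scenario you describe (nearly low-rank $E$ with $\|E\|_2$ just above $\delta/2$ and $d$ large) puts in doubt. This case is the actual content of the ``useful variant'' and needs a real argument; as written, the proof is incomplete. A secondary issue: your reduction starts from the canonical-angle identity $\|\sin\Theta\|_F=\|V_\perp^\top\hat V\|_F$, whereas the lemma as stated defines $\Theta$ entrywise through the matched inner products $\langle\hat v_j,v_j\rangle$. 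Since $\|V_\perp^\top\hat V\|_F^2=d-\|V^\top\hat V\|_F^2\le d-\sum_j\langle\hat v_j,v_j\rangle^2$, the entrywise quantity dominates the canonical one, so the passage you call ``routine'' goes in the wrong direction and cannot be waved through; with only the edge gaps $\lambda_{r-1}-\lambda_r$ and $\lambda_s-\lambda_{s+1}$ assumed positive, interior eigenvectors need not even be determined.
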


\subsection{Entropic optimal transport and Sinkhorn divergence}\label{subsec:EOTandSinkhorn}
For $N \in \mathbb{N}$, we define $\mathcal{P}_N$ as
\begin{equation}
    \mathcal{P}_N := \left\{\mu = (\mu_k) \in \mathbb{R}^N ~\middle|~ \mu_k \geq 0, ~ \sum_{k=1}^N \mu_k = 1 \right\}.
\end{equation}
For probability measures $\mu \in \mathcal{P}_N$ and $\nu \in \mathcal{P}_M$, we consider the following entropic optimal transport:
\begin{equation}\label{EOT}
    \mathrm{OT}_\eps(\mu,\nu) := \min_{\pi \in \Pi(\mu,\nu)} \langle C, \pi \rangle + \eps H(\pi),
\end{equation}
where $C \in \mathcal{M}_{N \times M}(\mathbb{R}_{+})$ is a cost matrix, and $\Pi(\mu,\nu)$ is the set of transportation plans from $\mu$ to $\nu$ defined as 
\begin{equation}
    \Pi(\mu,\nu) := \left\{\pi = (\pi_{ij}) \in \mathcal{P}_{N \times M} ~\middle|~  \sum_{l=1}^M \pi_{il} = \mu_i \text{ and } \sum_{l=1}^N \pi_{lj} = \nu_j \right\}.
\end{equation}
Here, $\eps > 0$ is the regularization parameter, and $H(\pi)$ denotes the Kullback--Leibler divergence \cite{KL51} of the transportation plan $\pi$ with respect to the product measure $\mu \otimes \nu$, which is defined as
\begin{equation}\label{KL}
    H(\pi) := \sum_{i=1}^N \sum_{j=1}^M \pi_{ij} \log \frac{\pi_{ij}}{\mu_i \nu_j},
\end{equation}
under the convention that $H(\pi):=+\infty$ if $\mu_i=0$ for some $i$ or $\nu_j=0$ for some $j$.

In the case where $\eps = 0$, problem \eqref{EOT} reduces to the following Monge--Kantorovich problem:
\begin{equation}\label{OT}
    \mathrm{OT}(\mu,\nu) := \min_{\pi \in \Pi(\mu,\nu)} \langle C, \pi \rangle.
\end{equation}
If the cost matrix $C$ is given by the squared Euclidean distances between the support points of $\mu$ and $\nu$, i.e., $C_{ij} = \|x_i - y_j\|^2$, then $\mathrm{OT}(\mu,\nu)$ corresponds to the squared Wasserstein distance between $\mu$ and $\nu$. 

One of the key advantages of entropic optimal transport is that it enables stable and fast computations using the Sinkhorn--Knopp algorithm \cite{sinkhorn1964relationship, SK1967}. This algorithm iteratively scales the transport matrix efficiently, and its implementation can be further optimized by leveraging GPUs, significantly reducing computational costs.

However, a known issue with entropic regularization is the entropy bias, where $\mathrm{OT}_{\eps}(\mu, \mu)$ is not zero. To address this issue, the Sinkhorn divergence was introduced \cite{ramdas2017wasserstein, genevay2018learning, feydy2019interpolating}, defined as:
\begin{equation}\label{SD}
SD_{\eps}(\mu,\nu) := \mathrm{OT}_\eps(\mu,\nu) - \frac{1}{2}\mathrm{OT}_\eps(\mu,\mu) - \frac{1}{2}\mathrm{OT}_\eps(\nu,\nu).
\end{equation}
The Sinkhorn divergence \eqref{SD} corrects the entropy bias by ensuring $SD_{\eps}(\mu,\mu) = 0$, providing a more accurate measure of distance between probability measures. As $\eps \to 0$, the Sinkhorn divergence \eqref{SD} converges to the optimal transport distance \eqref{OT}. In particular, the following quantitative error estimate holds: 
\begin{thm}[Error estimate of Sinkhorn divergence]\label{Estimate of SD}
For any $\eps > 0$ and probability measures $\mu \in \mathcal{P}_N$, $\nu \in \mathcal{P}_M$, the following estiamte holds:
\begin{equation*}
    |SD_{\eps}(\mu,\nu) - \mathrm{OT}(\mu,\nu)| \leq 2 \eps \log (NM).
\end{equation*}
\end{thm}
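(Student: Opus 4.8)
The plan is to reduce the statement to a two-sided comparison between $\mathrm{OT}_\eps$ and $\mathrm{OT}$ for an arbitrary pair of measures, and then to combine the three such comparisons that appear in the definition \eqref{SD} of the Sinkhorn divergence. The only genuinely new ingredient is a uniform bound on the entropic term $H(\pi)$ over admissible plans; once that is in place, the rest follows from elementary optimality arguments and the triangle inequality.

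First I would show that for every $\pi \in \Pi(\mu,\nu)$,
\begin{equation*}
0 \leq H(\pi) \leq \log(NM).
\end{equation*}
The lower bound is the nonnegativity of relative entropy (Gibbs' inequality), since by \eqref{KL} the quantity $H(\pi)$ is the Kullback--Leibler divergence of $\pi$ with respect to the admissible product plan $\mu\otimes\nu$. For the upper bound, I would use the marginal constraints defining $\Pi(\mu,\nu)$ to rewrite
\begin{equation*}
H(\pi) = \sum_{i,j}\pi_{ij}\log\pi_{ij} - \sum_{i}\mu_i\log\mu_i - \sum_{j}\nu_j\log\nu_j .
\end{equation*}
The first term is the negative Shannon entropy of the joint law $\pi$ and is thus $\leq 0$, while the two remaining terms are the Shannon entropies of $\mu$ and $\nu$, bounded respectively by $\log N$ and $\log M$ through the maximum-entropy inequality; adding these gives $H(\pi)\leq \log(NM)$.

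With this in hand I would prove the sandwich estimate
\begin{equation*}
0 \leq \mathrm{OT}_\eps(\mu,\nu) - \mathrm{OT}(\mu,\nu) \leq \eps\log(NM).
\end{equation*}
The lower inequality holds because evaluating \eqref{EOT} at its optimal plan $\pi_\eps$ and discarding the nonnegative term $\eps H(\pi_\eps)$ gives $\mathrm{OT}_\eps(\mu,\nu)\geq \langle C,\pi_\eps\rangle \geq \mathrm{OT}(\mu,\nu)$, the last step because $\pi_\eps$ is feasible for \eqref{OT}. The upper inequality follows by inserting the unregularized optimizer $\pi^\ast$ of \eqref{OT} into the regularized objective, where it is feasible though no longer optimal: $\mathrm{OT}_\eps(\mu,\nu)\leq \langle C,\pi^\ast\rangle + \eps H(\pi^\ast) = \mathrm{OT}(\mu,\nu)+\eps H(\pi^\ast)\leq \mathrm{OT}(\mu,\nu)+\eps\log(NM)$ by the entropy bound.

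Finally I would assemble the three terms of \eqref{SD}. Applying the sandwich estimate to $(\mu,\nu)$, $(\mu,\mu)$ and $(\nu,\nu)$ bounds the corresponding gaps by $\eps\log(NM)$, $2\eps\log N$ and $2\eps\log M$. Since the squared-distance cost used throughout has vanishing diagonal, the diagonal plan is optimal and $\mathrm{OT}(\mu,\mu)=\mathrm{OT}(\nu,\nu)=0$, so the unregularized part of \eqref{SD} collapses to $\mathrm{OT}(\mu,\nu)$; writing $SD_\eps(\mu,\nu)-\mathrm{OT}(\mu,\nu)$ as the signed combination of the three gaps and taking absolute values yields
\begin{equation*}
|SD_\eps(\mu,\nu)-\mathrm{OT}(\mu,\nu)| \leq \eps\log(NM) + \tfrac12\cdot 2\eps\log N + \tfrac12\cdot 2\eps\log M = 2\eps\log(NM).
\end{equation*}
I expect the main obstacle to be the upper bound on $H(\pi)$: the temptation is to estimate the relative entropy crudely, whereas the point is to split it via the marginal constraints into the joint entropy plus the two marginal entropies, which is precisely where the factor $\log(NM)$ originates. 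Tracking signs rather than absolute values in the last step would in fact sharpen the constant to $\eps\log(NM)$, but the triangle-inequality route reproduces the stated bound most directly.
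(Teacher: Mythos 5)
Your proof is correct and follows essentially the same route as the paper's: the sandwich bound $0 \le \mathrm{OT}_\eps(\mu,\nu) - \mathrm{OT}(\mu,\nu) \le \eps H(\pi^*)$ obtained by cross-evaluating the two optimizers, the decomposition of $H(\pi^*)$ into the nonpositive negative joint entropy plus the marginal Shannon entropies bounded by $\log N + \log M$, and the triangle inequality over the three terms of the Sinkhorn divergence. The only differences are cosmetic: you bound $H(\pi)$ uniformly over all admissible plans rather than just at $\pi^*$, and you explicitly justify that $\mathrm{OT}(\mu,\mu)=\mathrm{OT}(\nu,\nu)=0$ for the zero-diagonal cost, a step the paper's final triangle inequality leaves implicit.
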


\begin{proof}
Let $\pi^{\eps} = (\pi^{\eps}_{ij}) \in \Pi(\mu,\nu)$ be the solution of the entropic optimal transport \eqref{EOT}, which is unique due to the strict convexity of $H$. We have
\begin{equation*}
\mathrm{OT}(\mu,\nu) = \min_{\pi \in \Pi(\mu,\nu)} \langle C, \pi \rangle 
\leq \langle C, \pi^{\eps} \rangle = \mathrm{OT}_\eps(\mu,\nu) - \eps H(\pi^{\eps}).  
\end{equation*}
This implies that
\begin{equation}\label{Entropy lower bound}
\mathrm{OT}_\eps(\mu,\nu) - \mathrm{OT}(\mu,\nu) \geq \eps H(\pi^{\eps}) \geq 0,
\end{equation}
where we used the fact that the Kullback--Leibler divergence \eqref{KL} is non-negative \cite[Theorem 2.3]{polyanskiy2024information}.
Next, let $\pi^* = (\pi^*_{ij}) \in \Pi(\mu,\nu)$ be the solution of the optimal transport \eqref{OT}. Then, we have
\begin{align*}
\mathrm{OT}_\eps(\mu,\nu) &= \min_{\pi \in \Pi(\mu,\nu)} \langle C, \pi \rangle + \eps H(\pi) \\
&\leq \langle C, \pi^* \rangle + \eps H(\pi^*) = \mathrm{OT}(\mu,\nu) + \eps H(\pi^*).  
\end{align*}
Thus, we obtain the following upper bound:
\begin{equation*}
\mathrm{OT}_\eps(\mu,\nu) - \mathrm{OT}(\mu,\nu) \leq \eps H(\pi^*). 
\end{equation*}
Here, considering the Kullback--Leibler divergence \eqref{KL}, we have
\begin{align*}
H(\pi^*) &= \sum_{ij} \pi^*_{ij} \log \frac{\pi^*_{ij}}{\mu_i \nu_j} \\
&= \sum_{i,j} \pi^*_{ij} \log \pi^*_{ij} - \sum_{i=1}^N \log \mu_i \sum_{j=1}^M \pi^*_{ij}  - \sum_{j=1}^M \log \nu_j \sum_{i=1}^N \pi^*_{ij}\\
&= \sum_{ij} \pi^*_{ij} \log \pi^*_{ij} - \sum_{i=1}^N  \mu_i \log \mu_i - \sum_{j=1}^M \nu_j \log \nu_j \\
&= \sum_{ij} \pi^*_{ij} \log \pi^*_{ij} + S(\mu) + S(\nu), 
\end{align*}
where $S(\mu)$ denotes the Shannon entropy, defined as $S(\mu) = - \sum_{i=1}^N \mu_i \log \mu_i$. By the concavity, Shannon entropy $S(\mu)$ is bounded by $\log N$ \cite[Theorem 1.4]{polyanskiy2024information}. Using this property, we can further estimate $H(\pi^*)$ as follows:
\begin{align*}
H(\pi^*) &= \sum_{i,j} \pi^*_{ij} \log \pi^*_{ij} + S(\mu) +  S(\nu) \\
&\leq S(\mu) +  S(\nu) \\
&\leq \log N + \log M = \log(NM). 
\end{align*}
Therefore, the upper bound becomes
\begin{equation}\label{Entropy upper bound}
\mathrm{OT}_\eps(\mu,\nu) - \mathrm{OT}(\mu,\nu) \leq \eps \log(NM). 
\end{equation}
Combining \ref{Entropy lower bound} and \ref{Entropy upper bound}, we obtain 
\begin{equation}\label{Entropy estimate}
|\mathrm{OT}_\eps(\mu,\nu) - \mathrm{OT}(\mu,\nu)| \leq \eps \log(NM). 
\end{equation}
Therefore, we have 
\begin{align*}
&|SD_{\eps}(\mu,\nu) - \mathrm{OT}(\mu,\nu) | \\
&\leq |\mathrm{OT}_\eps(\mu,\nu) - \mathrm{OT}(\mu,\nu)| + \frac{1}{2}|\mathrm{OT}_\eps(\mu,\mu) - \mathrm{OT}(\mu,\mu)| + \frac{1}{2}|\mathrm{OT}_\eps(\nu,\nu) - \mathrm{OT}(\nu,\nu)|\\
&\leq  \eps \log(NM) + \eps \log N + \eps \log M = 2 \eps \log (NM), 
\end{align*}
which completes the proof. 
\end{proof}

\section{Sinkhorn multidimensional scaling}\label{sec:SinkhornMDS}
In this section, we introduce Sinkhorn multidimensional scaling (Sinkhorn MDS), a method designed to visualize the “shape” of shape functionals within shape spaces using the Sinkhorn divergence \eqref{SD} between shapes as probability measures. By applying Sinkhorn MDS, we can effectively explore and capture the intricate structures of these shape functionals. We will define the Sinkhorn MDS algorithm and establish an error estimate for Sinkhorn MDS compared to the case when $\eps=0$.

\begin{dfn}[Sinkhorn MDS]\label{def Sinkhorn MDS}
For $\{n_i\}_{i=1}^N \subset \NN$ and a family of probability measures $\left\{ \mu_i  \mid \mu_i \in \mathcal{P}_{n_i}, ~i=1,\ldots,N \right\}$, define $A_{\eps} := \left(SD_\eps (\mu_i,\mu_j)\right)_{ij}$. Furthermore, let $B_{\eps}$ be the Gram matrix formed from $A_{\eps}$, defined as $B_{\eps} = -\frac{1}{2} H_N A_{\eps} H_N$. Let $\lambda_{1,\eps} \geq \cdots \geq \lambda_{N,\eps}$ be the eigenvalues of $B_{\eps}$, and $v_{1,\eps}, \ldots , v_{N,\eps}$ be a corresponding orthonormal set of eigenvectors. The Sinkhorn MDS mapping $\Phi_{\eps} : \{\mu_i\}_{i=1}^N  \to \RR^k$ is then defined as
\begin{equation*}
    \Phi_{\eps}(\mu_i) = \left(\lambda_{1,\eps}^{1/2} v_{1,\eps}(i), \cdots, \lambda_{k,\eps}^{1/2} v_{k,\eps}(i) \right)
\end{equation*}
\end{dfn}

The Sinkhron MDS algorithm can be summarized as follows: 
\begin{algorithm}[htbp]
\caption{Sinkhorn MDS}
\begin{algorithmic}[1]
\STATE \textbf{Input:} For $\{n_i\}_{i=1}^N \subset \NN$ and a family of probability measures $\{ \mu_i  \mid \mu_i \in \mathcal{P}_{n_i}, ~i=1,\ldots,N \}$. 
\STATE Compute the pairwise Sinkhorn divergence matrix $A_{\eps} := \left(SD_\eps (\mu_i,\mu_j)\right)_{ij}$ and the Gram matrix $B_{\eps} = -\frac{1}{2}H_N A_{\eps} H_N$. 
\STATE Compute the eigenvalues $\lambda_{1,\eps} \geq \cdots \geq \lambda_{\mathrm{pr}(B_{\eps}),\eps} > 0 = \lambda_{\mathrm{pr}(B_{\eps})+1,\eps} \geq \cdots \geq \lambda_{N,\eps}$ and a corresponding orthonormal set of eigenvectors $\{v_{1,\eps}, \ldots, v_{N,\eps}\}$ of the matrix $B_{\eps}$. 
\STATE Choose $k \leq \mathrm{pr}(B_{\eps})$.
\STATE Construct $\hat{\Lambda}_{k,\eps}^{1/2} V_{k,\eps}^T$ where $\hat{\Lambda}_{k,\eps} := \text{diag}(\lambda_{1,\eps}, \ldots, \lambda_{k,\eps})$ and $V_{k,\eps} = [v_{1,\eps} \ldots v_{k,\eps}]$.
\STATE Let $\Phi_{\eps}(x_i) \in \mathbb{R}^k$ be the $i$-th column vector of $\hat{\Lambda}_{k,\eps}^{1/2} V_{k,\eps}^T$ for each $i = 1, \ldots, n.$ 
\STATE \textbf{Output:} \begin{align*}
\Phi_{\eps}: \{\mu_i\}_{i=1}^N &\rightarrow \RR^k \\
\mu_i &\mapsto \hat{\Lambda}_{k,\eps}^{1/2} V_{k,\eps}^T(i).
\end{align*}
\end{algorithmic}
\end{algorithm}

\subsection{Error estimate of Sinkhorn MDS compared to the case when $\eps=0$}\label{subsec:main_result}
\begin{thm}[Error estimate of Sinkhorn MDS]\label{Convergence rate of Sinkhorn MDS}
For $\{n_i\}_{i=1}^N \subset \NN$ and a family of probability measures $\left\{ \mu_i  \mid \mu_i \in \mathcal{P}_{n_i}, ~i=1,\ldots,N \right\}$, let $B$ be the Gram matrix formed from $A= \left( \mathrm{OT}(\mu_i,\mu_j)\right)_{ij}$. Let $\lambda_1 \geq \cdots \geq \lambda_N$ be the eigenvalues of $B$ and let $v_{1}, \ldots, v_{N}$ be a corresponding orthonormal set of eigenvectors satisfying $\langle v_{j,\eps}, v_j \rangle_{\RR^k} \geq 0$ for $j=1, \ldots, N$, where the vectors $v_{j,\eps}$ are those that appear in Definition \ref{def Sinkhorn MDS}. For $k \leq \mathrm{pr}(B)$ such that $\lambda_k > \lambda_{k+1}$, we obtain the following error estimate: 
\begin{equation*}
\frac{1}{N}\sum_{i=1}^N\|\Phi_{\eps}(\mu_i) - \Phi(\mu_i)\|^2_{\RR^k} \leq \frac{64 \lambda_1 k N }{\left( \lambda_k - \lambda_{k+1} \right)^2} \left\{ \log \left(\max_i ~n_i \right)\right\}^2 \eps^2 + 4k \log \left(\max_i ~n_i \right) \eps.
\end{equation*}
\end{thm}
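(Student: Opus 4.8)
The plan is to transfer the estimate from the two embeddings to the spectral data of the Gram matrices $B_\eps$ and $B$, and then to bound the eigenvalue and eigenvector perturbations separately by combining the three auxiliary results above. By the definition of $\Phi_\eps$ and $\Phi$, stacking coordinates over $i$ gives
\begin{equation*}
\sum_{i=1}^N \|\Phi_\eps(\mu_i) - \Phi(\mu_i)\|_{\RR^k}^2 = \sum_{j=1}^k \bigl\| \lambda_{j,\eps}^{1/2} v_{j,\eps} - \lambda_j^{1/2} v_j \bigr\|_{\RR^N}^2 .
\end{equation*}
To control the perturbation of the Gram matrices, I would apply Theorem~\ref{Estimate of SD} entrywise: since $\mu_i \in \mathcal P_{n_i}$, we have $|(A_\eps - A)_{ij}| = |SD_\eps(\mu_i,\mu_j) - \mathrm{OT}(\mu_i,\mu_j)| \leq 2\eps\log(n_i n_j) \leq 4\eps\log(\max_i n_i)$. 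Summing $N^2$ entries yields $\|A_\eps - A\|_F \leq 4N\log(\max_i n_i)\,\eps$, and since $H_N$ is an orthogonal projection ($\|H_N\|_2 = 1$), the identity $B_\eps - B = -\tfrac12 H_N(A_\eps - A)H_N$ gives $\|B_\eps - B\|_2 \leq \tfrac12\|A_\eps - A\|_F \leq 2N\log(\max_i n_i)\,\eps =: \delta$.

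Next I would split each mode through $\lambda_{j,\eps}^{1/2}v_{j,\eps} - \lambda_j^{1/2}v_j = (\lambda_{j,\eps}^{1/2} - \lambda_j^{1/2})v_{j,\eps} + \lambda_j^{1/2}(v_{j,\eps} - v_j)$, so that the elementary inequality $\|a+b\|^2 \leq 2\|a\|^2 + 2\|b\|^2$ together with $\|v_{j,\eps}\|_{\RR^N} = 1$ produce an eigenvalue term $2|\lambda_{j,\eps}^{1/2} - \lambda_j^{1/2}|^2$ and an eigenvector term $2\lambda_j\|v_{j,\eps}-v_j\|_{\RR^N}^2$. For the eigenvalue term I would use the scalar bound $|\sqrt a - \sqrt b|^2 \leq |a-b|$ (valid for $a,b\geq 0$) together with Lemma~\ref{Perturbation theorem}, which gives $|\lambda_{j,\eps}^{1/2} - \lambda_j^{1/2}|^2 \leq |\lambda_{j,\eps} - \lambda_j| \leq \|B_\eps - B\|_2 \leq \delta$; summing over the $k$ modes contributes $2k\delta = 4kN\log(\max_i n_i)\,\eps$, which is exactly the linear-in-$\eps$ term once we divide by $N$.

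For the eigenvector term I would exploit the sign normalization $\langle v_{j,\eps},v_j\rangle \geq 0$: setting $\theta_j := \cos^{-1}\langle v_{j,\eps},v_j\rangle \in [0,\pi/2]$, one has $\|v_{j,\eps}-v_j\|_{\RR^N}^2 = 2(1-\cos\theta_j) \leq 2\sin^2\theta_j$. I would then apply Lemma~\ref{V Davis--Kahan theorem} with $\Sigma = B$, $\hat\Sigma = B_\eps$, $r=1$, $s=k$, $d=k$; because $\lambda_0 = \infty$ the relevant gap reduces to $\min\{\lambda_0 - \lambda_1,\lambda_k - \lambda_{k+1}\} = \lambda_k - \lambda_{k+1} > 0$, and keeping the $d^{1/2}\|\cdot\|_2$ branch of the minimum gives $\sum_{j=1}^k \sin^2\theta_j = \|\sin\Theta(\hat V,V)\|_F^2 \leq 4k\delta^2/(\lambda_k - \lambda_{k+1})^2$. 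Bounding $\lambda_j \leq \lambda_1$ then yields an eigenvector contribution of at most $16\lambda_1 k\delta^2/(\lambda_k-\lambda_{k+1})^2 = 64\lambda_1 kN^2\{\log(\max_i n_i)\}^2\eps^2/(\lambda_k-\lambda_{k+1})^2$. Summing the two contributions and dividing by $N$ gives the claimed bound.

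I expect the main difficulty to be purely in the bookkeeping of constants, rather than in any new idea: one must take the operator-norm route through $H_N$, keep the $d^{1/2}\|\cdot\|_2$ branch of the Davis--Kahan minimum, and track the factors $\tfrac12$, $2$, and $4$ so that they assemble into the precise constants $64$ and $4$. A secondary technical point is the well-definedness of the square roots in $\Phi_\eps$, that is, $\lambda_{j,\eps}\geq 0$ for $j\leq k$; this follows from Lemma~\ref{Perturbation theorem}, since $\lambda_{j,\eps}\geq \lambda_k - \delta$, provided $\eps$ is small enough that $\lambda_k > \delta$.
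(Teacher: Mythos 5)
Your proposal is correct and follows essentially the same route as the paper's proof: the same splitting into an eigenvalue term (controlled by Lemma~\ref{Perturbation theorem} and $|\sqrt{a}-\sqrt{b}|^2\leq|a-b|$) and an eigenvector term (controlled via $\|v_{j,\eps}-v_j\|^2=2(1-\cos\theta_j)\leq 2\sin^2\theta_j$ and the $d^{1/2}\|\cdot\|_2$ branch of Lemma~\ref{V Davis--Kahan theorem}), together with the identical bound $\|B_\eps-B\|_2\leq\frac12\|A_\eps-A\|_F\leq 2N\log(\max_i n_i)\,\eps$ from Theorem~\ref{Estimate of SD}, yielding the same constants $64$ and $4$. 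The only cosmetic difference is that you work with whole eigenvectors in $\RR^N$ where the paper argues entrywise and then sums over $i$; your closing remark on the nonnegativity of $\lambda_{j,\eps}$ is a sensible extra observation that the paper handles implicitly by requiring $k\leq\mathrm{pr}(B_\eps)$ in the algorithm.
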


\begin{rem}
If the cost matrix $C$ is given by the squared Euclidean distances, then $A$ corresponds to the squared Wasserstein distance matrix, and $\Phi$ represents the Wassmap embedding \cite{Hamm2023} based on this squared Wasserstein distance matrix. 
\end{rem}

\begin{proof}
By constructing the embedding of Sinkhorn MDS $\Phi_{\eps}$, and the embedding $\Phi$ when $\eps = 0$, we have
\begin{align*}
\|\Phi_{\eps}(\mu_i) - \Phi(\mu_i)\|_{\RR^k}^2 &= \sum_{j=1}^k |\lambda^{1/2}_j v_j(i) - \lambda^{1/2}_{j,\eps} v_{j,\eps}(i)|^2 \\
&\leq \sum_{j=1}^k \left\{ \lambda^{1/2}_j |v_j(i) - v_{j,\eps}(i)| + |v_{j,\eps}(i)||\lambda^{1/2}_j  - \lambda^{1/2}_{j,\eps}| \right\}^2. 
\end{align*}
Applying the elementary inequality $(x+y)^2 \leq 2(x^2 + y^2)$, we have
\begin{equation*}
\|\Phi_{\eps}(\mu_i) - \Phi(\mu_i)\|_{\RR^k}^2 \leq 2 \sum_{j=1}^k \left\{\lambda_j |v_j(i) - v_{j,\eps}(i)|^2 + |v_{j,\eps}(i)|^2 |\lambda^{1/2}_j  - \lambda^{1/2}_{j,\eps}|^2 \right\}. 
\end{equation*}
Also, using the inequality $(x^{1/2} - y^{1/2})^2 \leq |x-y|$ for $x,y \geq 0$, we have 
\begin{equation*}
\|\Phi_{\eps}(\mu_i) - \Phi(\mu_i)\|_{\RR^k}^2 \leq 2\sum_{j=1}^k \left\{\lambda_j |v_j(i) - v_{j,\eps}(i)|^2 + |v_{j,\eps}(i)|^2 |\lambda_j  - \lambda_{j,\eps}| \right\}.
\end{equation*}
Applying Lemma \ref{Perturbation theorem}, we obtain 
\begin{align*}
\|\Phi_{\eps}(\mu_i) - \Phi(\mu_i)\|_{\RR^k}^2 &\leq 2\sum_{j=1}^k \left\{\lambda_j |v_j(i) - v_{j,\eps}(i)|^2 + |v_{j,\eps}(i)|^2 \|B_{\eps} - B\|_2 \right\} \\
&\leq  2 \lambda_1 \sum_{j=1}^k |v_j(i) - v_{j,\eps}(i)|^2 + 2\|B_{\eps} - B\|_2 \sum_{j=1}^k |v_{j,\eps}(i)|^2.
\end{align*}
Therefore, we have 
\begin{align*}
\sum_{i=1}^N \|\Phi_{\eps}(\mu_i) - \Phi(\mu_i)\|_{\RR^k}^2 &\leq  2 \lambda_1 \sum_{j=1}^k \|v_j - v_{j,\eps} \|_{\RR^k}^2 + 2k \|B_{\eps} - B\|_2.
\end{align*}
We used the normalization of the eigenvectors and, under the assumption that $\langle v_{j,\eps}, v_j \rangle_{\RR^k} \geq 0$ for any $j = 1, \ldots, k$, we obtain
\begin{align*} 
\sum_{j=1}^k \|v_{j,\eps} - v_j \|^2_{\RR^k} &= 2\sum_{j=1}^k \big( 1 - \langle v_{j,\eps}, v_j \rangle_{\RR^k} \big)\\
&\leq 2\sum_{j=1}^k \big( 1 - \langle v_{j,\eps}, v_j \rangle_{\RR^k} \big) \big(1 + \langle v_{j,\eps}, v_j \rangle_{\RR^k} \big)\\
&= 2\sum_{j=1}^k \sin^2 \big( \cos^{-1} \langle v_{j,\eps}, v_j \rangle_{\RR^k} \big).
\end{align*}
Let us denote $V_{\eps} = (v_{1,\eps}, \ldots, v_{k,\eps})$ and $V = (v_1, \ldots, v_k)$. Then, we have
\begin{equation*} 
\sum_{j=1}^k \sin^2 \big( \cos^{-1} \langle v_{j,\eps}, v_j \rangle_{\RR^k} \big) = \|\sin \Theta (V_{\eps}, V)\|_F^2. 
\end{equation*} 
Therefore, by applying Lemma \ref{V Davis--Kahan theorem} with $r=1$ and $s=k$, we obtain 
\begin{equation}\label{estimate A}
\begin{aligned} 
\sum_{j=1}^k \|v_{j,\eps} - v_j\|^2_{\RR^k} &\leq 2\|\sin \Theta (V_{\eps}, V)\|_F^2 \\ 
&\leq \frac{8 \min \left\{k^{1/2} \|B_{\eps} - B\|_2, \|B_{\eps} - B\|_F \right\}^2}{(\lambda_k - \lambda_{k+1})^2} \\
&\leq \frac{8 k}{(\lambda_k - \lambda_{k+1})^2} \|B_{\eps} - B\|_2^2. 
\end{aligned}
\end{equation}

We now need to estimate the norm $\|B_\eps - B\|_2$. By the definition of the centering matrix $H_N$, we have $\det(\lambda I - H_N) = \lambda (\lambda-1)^{N-1}$, and thus $\| H_N \|_2 = 1$. This implies that  
\begin{align*}
\|B_{\eps} -B\|_2 &= \frac{1}{2}\|H_N A_{\eps} H_N - H_N A H_N \|_2 \\
&=\frac{1}{2}\|H_N (A_{\eps} - A) H_N \|_2 \\
&\leq \frac{1}{2} \|A_{\eps} - A\|_2 \\
&\leq \frac{1}{2} \|A_{\eps} - A\|_F.
\end{align*}
Furthermore, applying Theorem \ref{Estimate of SD} to bound $\|A_\eps - A \|_F$, we get 
\begin{equation}\label{estimate B}    
\begin{aligned}
\|B_{\eps} -B\|_2 &\leq \frac{1}{2} \|A_{\eps} - A\|_F \\
&=\frac{1}{2} \sqrt{\sum_{i,j=1}^N \left(SD_{\eps}(\mu_i,\mu_j) - \mathrm{OT}(\mu_i,\mu_j) \right)^2} \\
&\leq \eps \sqrt{\sum_{i,j=1}^N \left(\log(n_i n_j)\right)^2} \\
&\leq 2 \eps N \log \left(\max_i ~n_i \right).
\end{aligned}
\end{equation}
Therefore, by combining \eqref{estimate A} and \eqref{estimate B}, we obtain
\begin{align*}
\sum_{i=1}^N \|\Phi_{\eps}(\mu_i) - \Phi(\mu_i)\|_{\RR^k}^2 &\leq  2 \lambda_1 \sum_{j=1}^k \|v_j - v_{j,\eps} \|^2 + 2k \|B_{\eps} - B\|_2 \\
&\leq \frac{16 \lambda_1 k}{\left( \lambda_k - \lambda_{k+1} \right)^2} \|B_{\eps} - B\|^2_2 + 2k \|B_{\eps} - B\|_2 \\
&\leq  \frac{64 \lambda_1 k N^2 }{\left(\lambda_k - \lambda_{k+1} \right)^2} \left\{ \log \left(\max_i ~n_i \right)\right\}^2 \eps^2 + 4k N \log \left(\max_i ~n_i \right) \eps, 
\end{align*}
which completes the proof. 
\end{proof}

\section{Numerical experiments}\label{sec:Numerical_experiments}
In this section, we describe the visualization method for shape functionals using Sinkhorn MDS. For $r \in C(\RR/ 2\pi\ZZ)$, we define $\Omega_r$ as the closed curve given given in polar coordinates by $r: \RR/ 2\pi \ZZ \to \RR$, as follows: 
\begin{equation*}
    \Omega_r := \{(r(\theta) \cos \theta, r(\theta) \sin \theta) \in \RR^2 \mid \theta \in\RR / 2\pi \ZZ\}.
\end{equation*}
We define the shape space $\mathcal{S}$ as the set of Jordan curves in $\RR^2$ that can be represented in polar coordinates. Specifically,
\begin{equation*}
    \mathcal{S} := \left\{\Omega_r \subset \RR^2 ~\middle|~ r \in C(\RR / 2\pi \ZZ), \Omega_r \,\, \text{is a Jordan curve} \right\}. 
\end{equation*}
Next, we explain the Fourier perturbation of shapes, which is essential for shape generation. First, choose some parameters $\delta > 0$, $a_0 \in \RR$, and $K \in \NN$. For $\{a_k\}_{k=1}^K \subset [-\delta, \delta]$, uniformly sampled from $[-\delta, \delta]$, we define a perturbation function as a random Fourier series: 
\begin{equation*}
\psi(\theta) := a_0 + \sum_{k=1}^K a_k \sin (k\theta).
\end{equation*}
Then, for $\Omega_r \in \mathcal{S}$,
\begin{equation}\label{random Fourier perturbed shape}
\Omega_{r+\psi} = \left\{ \left( \left\{r(\theta) + \psi(\theta)\right\} \cos \theta, \left\{r(\theta) + \psi(\theta)\right\} \sin (\theta) \right) \in \RR^2 \mid \theta \in\RR /  2\pi \ZZ\right\}    
\end{equation}
is defined as the random Fourier perturbed shape of $\Omega_r$. 

In numerical experiments, the shape is identified with a uniform probability measure. Let the probability measures corresponding to the shapes  $\Omega_1 \in \mathcal{S}$ and $\Omega_2 \in \mathcal{S}$ be $\mu_1$  and $\mu_2$, respectively. Then, the similarity $d_{\eps}$ between shapes $\Omega_1 \in \mathcal{S}$ and $\Omega_2 \in \mathcal{S}$ is defined as 
\begin{equation}\label{shape_similality}
d_{\eps}(\Omega_1, \Omega_2)^2 := SD_{\eps}(\mu_1, \mu_2). 
\end{equation}
Similarly, we define $d(\Omega_1, \Omega_2)^2 := \mathrm{OT}(\mu_1, \mu_2)$. In the context of these experiments, the cost matrix $C$ is assumed to be given by the squared Euclidean distances.

In the following, we present visualizations of three shape functionals. In the numerical experiment for each shape functional, we first generate basic shapes such as stationary points. (In each subsection, we will explain the definitions of these basic shapes.) Next, we apply random Fourier perturbations to these shapes to generate perturbed shapes \eqref{random Fourier perturbed shape}. Then, based on the similarity measure $d_{\eps}$ introduced in \eqref{shape_similality}, we apply Sinkhorn MDS to the generated shapes to project the shapes onto the two-dimensional $xy$-plane. Finally, we visualize the shape functionals by plotting the functional values of the shapes along the $z$-axis. First, we start with the visualization of the volume functional with perimeter constraint, corresponding to Dido's problem. Next, we introduce and visualize double-well type shape functionals that exhibit two minimizers. Lastly, we introduce Sinkhorn cone-type shape functionals and observe the effects of increasing $\eps > 0$. In the numerical experiments for visualizing these shape functionals, we generate shapes using Fourier parameters $K=5$. The method for choosing $\delta > 0$ varies depending on the scale of the shape functional, therefore, we will explain the appropriate parameters in each subsection.

\subsection{Volume maximization problems with perimeter constraints (Dido's problem)}\label{subsec:ex1}
First, we present numerical experiments focused on visualizing the volume functional under the perimeter constraint, a classical problem known as Dido’s problem, which was introduced in the introduction. For simplicity, we consider the functional $I : \mathcal{S} \to \RR$ defined as follows: 
\begin{equation*}
I(\Omega) := \frac{\mathrm{Vol}(\Omega)}{\mathrm{Per}(\Omega)^2}, 
\end{equation*}
where $\mathrm{Vol}(\Omega)$ denotes the volume of the region enclosed by $\Omega$, and $\mathrm{Per}(\Omega)$ denotes the perimeter of $\Omega$. The maximization problem of the functional $I$ under the perimeter constraint is equivalent to Dido's problem. 
\begin{figure}[htbp]
    \centering
    \includegraphics[width=0.5\linewidth]{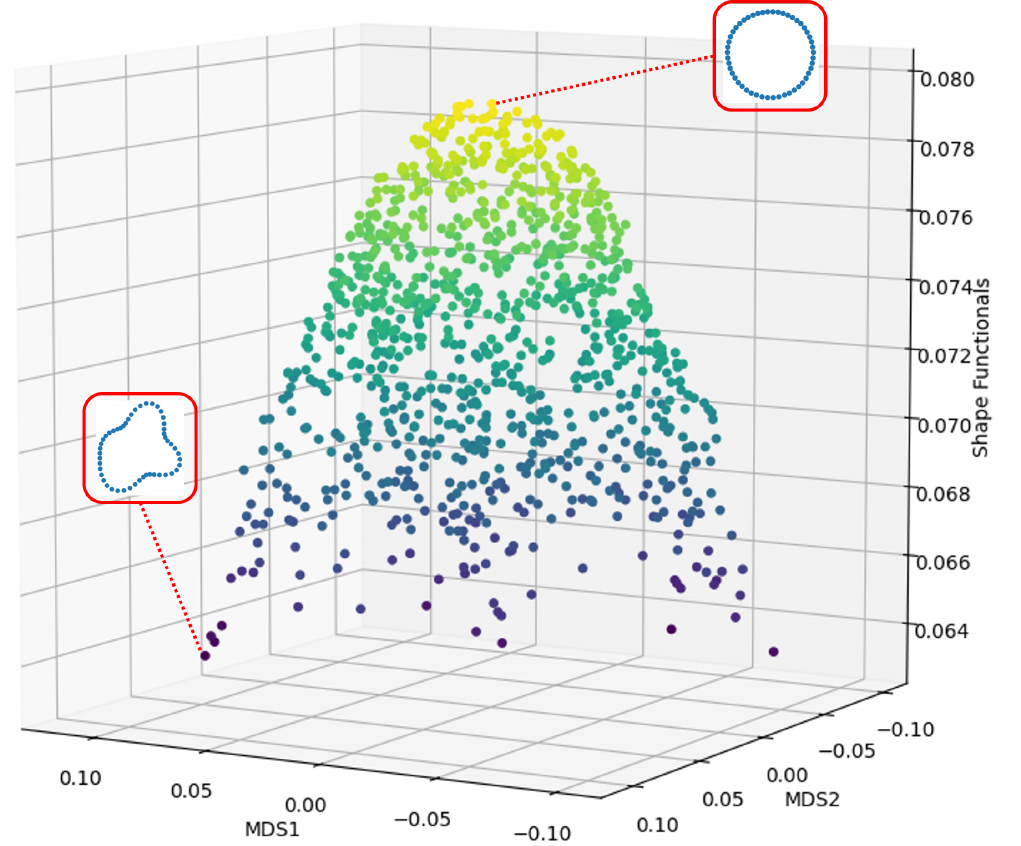}
    \caption{Visualization of the volume functional with perimeter constraint using Sinkhorn MDS with parameters: $1000$ shapes, $\eps = 10^{-3}$, and Fourier parameters $\delta=0.1$.}
    \label{fig1}
\end{figure}
In this numerical experiment, we set the basic shape as a circle, which maximizes the volume functional with perimeter constraint. We generate 1000 random Fourier perturbed shapes of the circle. 
Figure \ref{fig1} illustrates the results, revealing a non-trivial structure in the shape functional. This structure resembles a parabolic surface, with the circle as its unique critical point and maximizer. This visualization reflects Aleksandrov's \emph{soap bubble theorem} \cite{aleksandrov1958uniqueness, alexandrov1962characteristic}.

\subsection{Double-well type shape functionals}\label{subsec:ex2}
Next, we introduce the double-well type shape functionals and present numerical experiments on their visualization. The double-well type shape functional $V : \mathcal{S} \to \RR$ for shapes $A$ and $B$ is then defined by 
\begin{equation*}
V(\Omega) := d(\Omega, A)^2 d(\Omega, B)^2. 
\end{equation*}
This functional can be viewed as an extension to a shape functional of the double-well potential $V(x) = (x-a)^2 (x-b)^2$. In this numerical experiment, we visualize the double-well type shape functional with $A$ as a circle and $B$ as an equilateral triangle. 

\begin{figure}[htbp]
    \centering
    \includegraphics[width=0.5\linewidth]{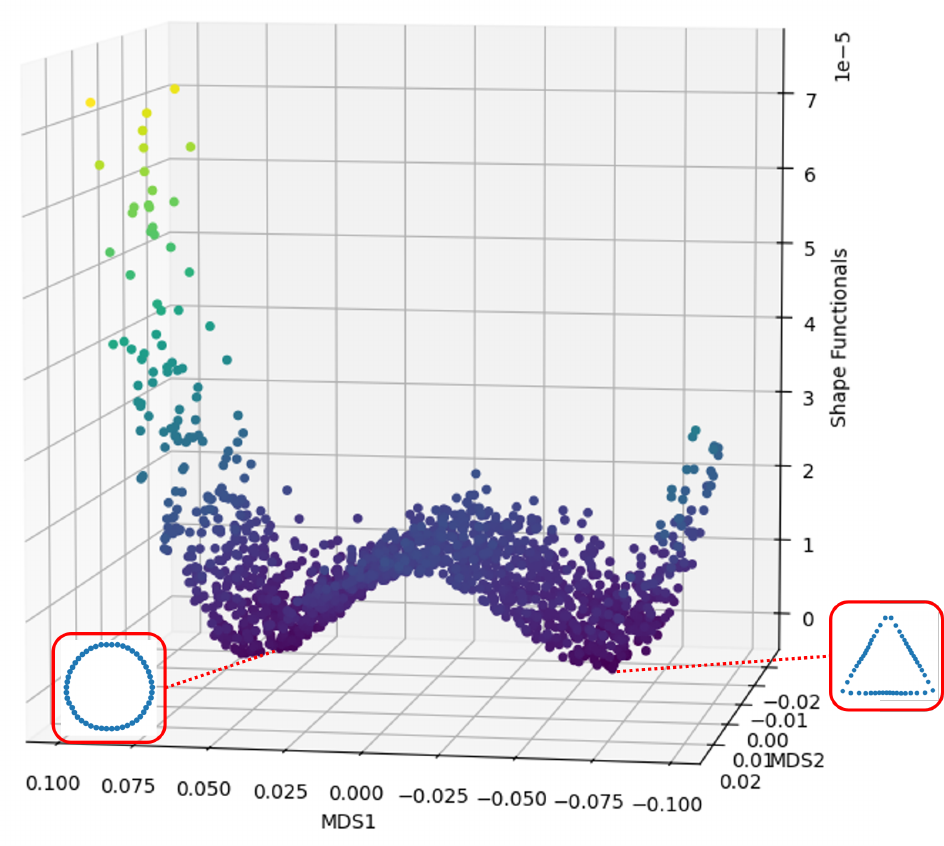}
    \caption{Visualization of the double-well type shape functionals using Sinkhorn MDS with parameters: $1600$ shapes, $\eps = 10^{-3}$, and Fourier parameters $\delta=0.03$.}
    \label{fig2}
\end{figure}

To generate the shapes, we identify the shapes $A$ and $B$ with their respective probability measures $\mu$ and $\nu$, and generate the McCann's displacement interpolation \cite{mccann2001polar} between $\mu$ and $\nu$. Specifically, let $T$ be the optimal transport map from $\mu$ to $\nu$. We define the family of shapes $\{\Omega_t\}_{t \in I}$ by $\Omega_t = ((1-t) \mathrm{Id} + t T)_\# \mu$, and consider them as the interpolated shapes between $A$ and $B$. Furthermore, we generate interpolated shapes with $I=[-0.3, 1.3]$ and a step size of $0.1$. For each interpolated shape, we generate $100$ random Fourier perturbed shapes, resulting in a total of $1600$ shapes. Figure \ref{fig2} illustrates the numerical experiment for the visualization of the double-well type shape functionals. We can observe the landscape around two critical points of a double-well type shape functionals. 

\subsection{Sinkhorn cone-type shape functionals and the effect of the regularization parameters}\label{subsec:ex3}
Finally, we introduce the Sinkhorn cone-type shape functional and examine the effect of the regularization parameter $\eps$ in the Sinkhorn MDS. The Sinkhorn cone-type shape functional $F_{\eps}: \mathcal{S} \to \RR$ with respect to a given shape $A$ is defined by
\begin{equation}\label{Sinkhorn cone-type shape functional}
    F_{\eps}(\Omega) := d_{\eps}(\Omega, A).
\end{equation}
Similar to subsection \ref{subsec:ex1}, we set the basic $A$ shape as a circle, which is the minimizer of the Sinkhorn cone-type shape functionals. We generate $1000$ random Fourier perturbed shapes of the circle. Figure \ref{fig3} illustrates the numerical experiment for the visualization of the Sinkhorn cone-type shape functionals. It is observed that as $\eps$ increases, the point cloud $ \left\{\big(\Phi_{\eps}(\Omega_i), F_{\eps}(\Omega_i)\big) \right\}_{i=1}^N$, which corresponds to the graph of the shape functional \eqref{Sinkhorn cone-type shape functional}, becomes more aligned. 
Since Sinkhorn divergence is known to interpolate between optimal transport and Maximum Mean Discrepancies (MMD), and converges to MMD as $\eps \to +\infty$ \cite{ramdas2017wasserstein, genevay2018learning, feydy2019interpolating}. these numerical results suggest that MMD has an effect in aligning the point cloud representing the shape functional.

\section{Conclusions}\label{sec:conclusions}
In this paper, we presented Sinkhorn multidimensional scaling (Sinkhorn MDS) as a tool for visualizing shape functionals within shape spaces. By combining classical multidimensional scaling (MDS) with the Sinkhorn divergence \eqref{SD}, we effectively mapped infinite-dimensional shape spaces into lower-dimensional Euclidean spaces, enabling the visualization of shape functionals. 

\begin{figure}[htbp]
    \begin{tabular}{cc}
      \begin{minipage}[t]{0.45\hsize}
        \centering
        \includegraphics[keepaspectratio, scale=0.25]{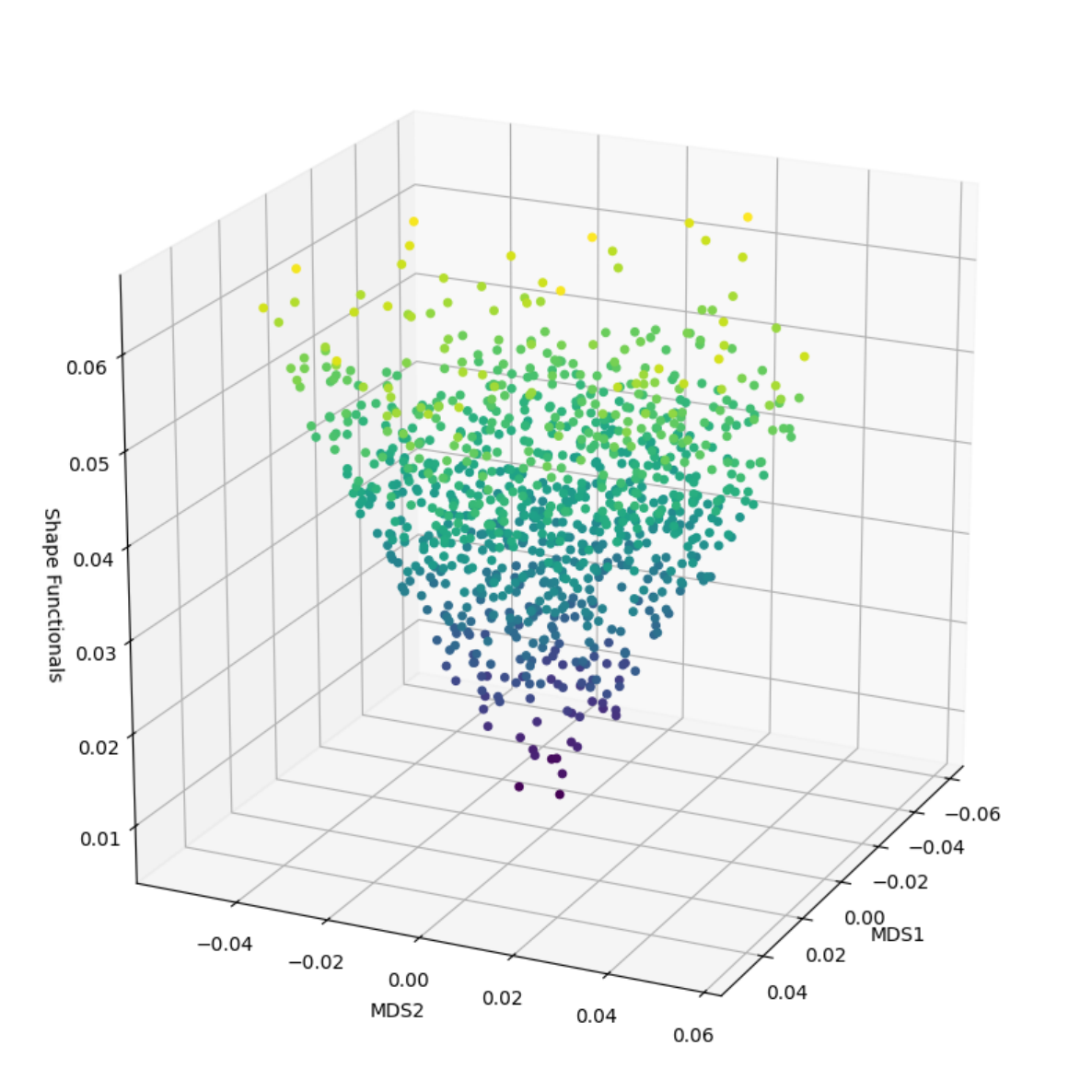}
        \subcaption{$\eps=0$}
        \label{shape_cone_eps=0}
      \end{minipage} &
      \begin{minipage}[t]{0.45\hsize}
        \centering
        \includegraphics[keepaspectratio, scale=0.25]{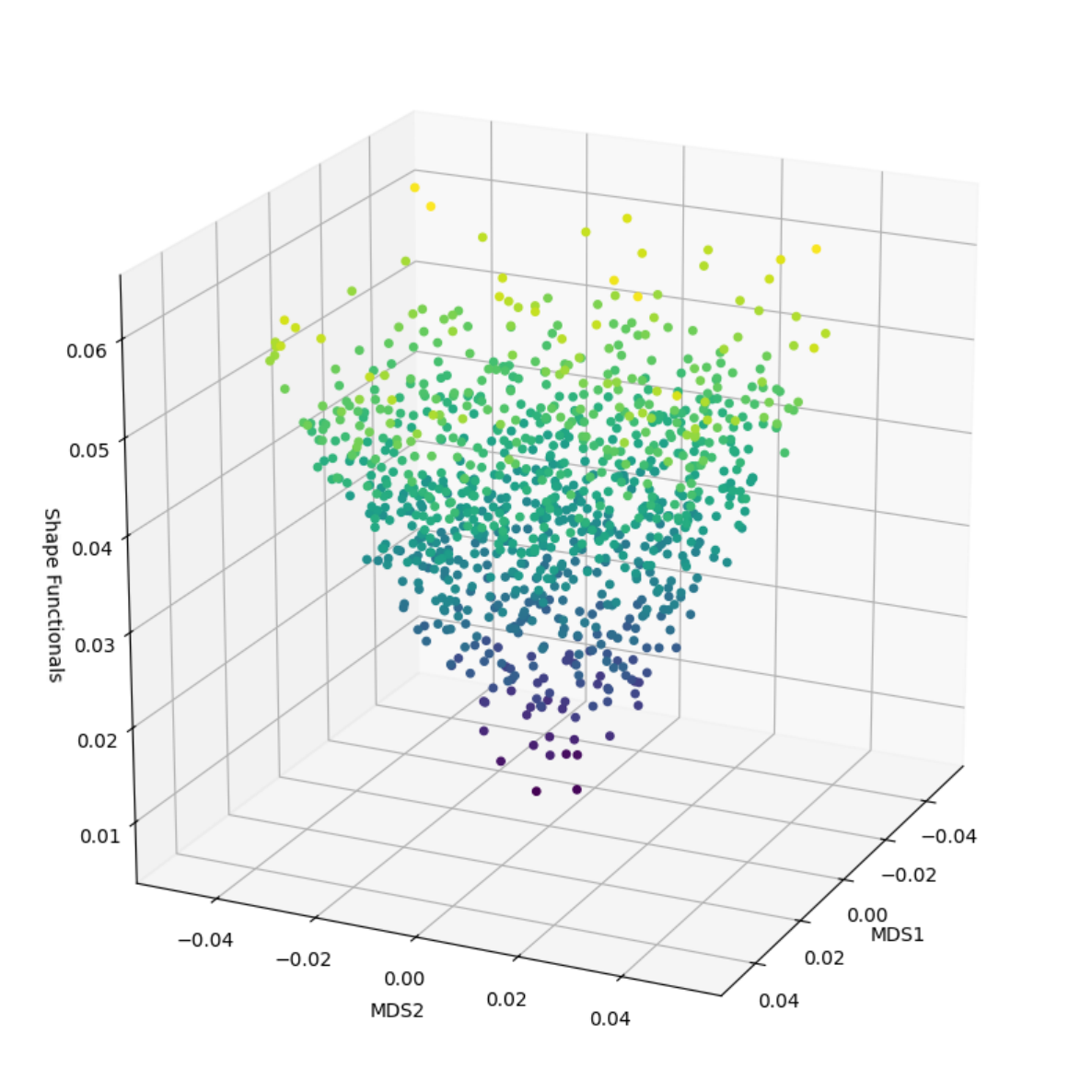}
        \subcaption{$\eps=0.01$}
        \label{shape_cone_eps=0.01}
      \end{minipage} \\
   
      \begin{minipage}[t]{0.45\hsize}
        \centering
        \includegraphics[keepaspectratio, scale=0.25]{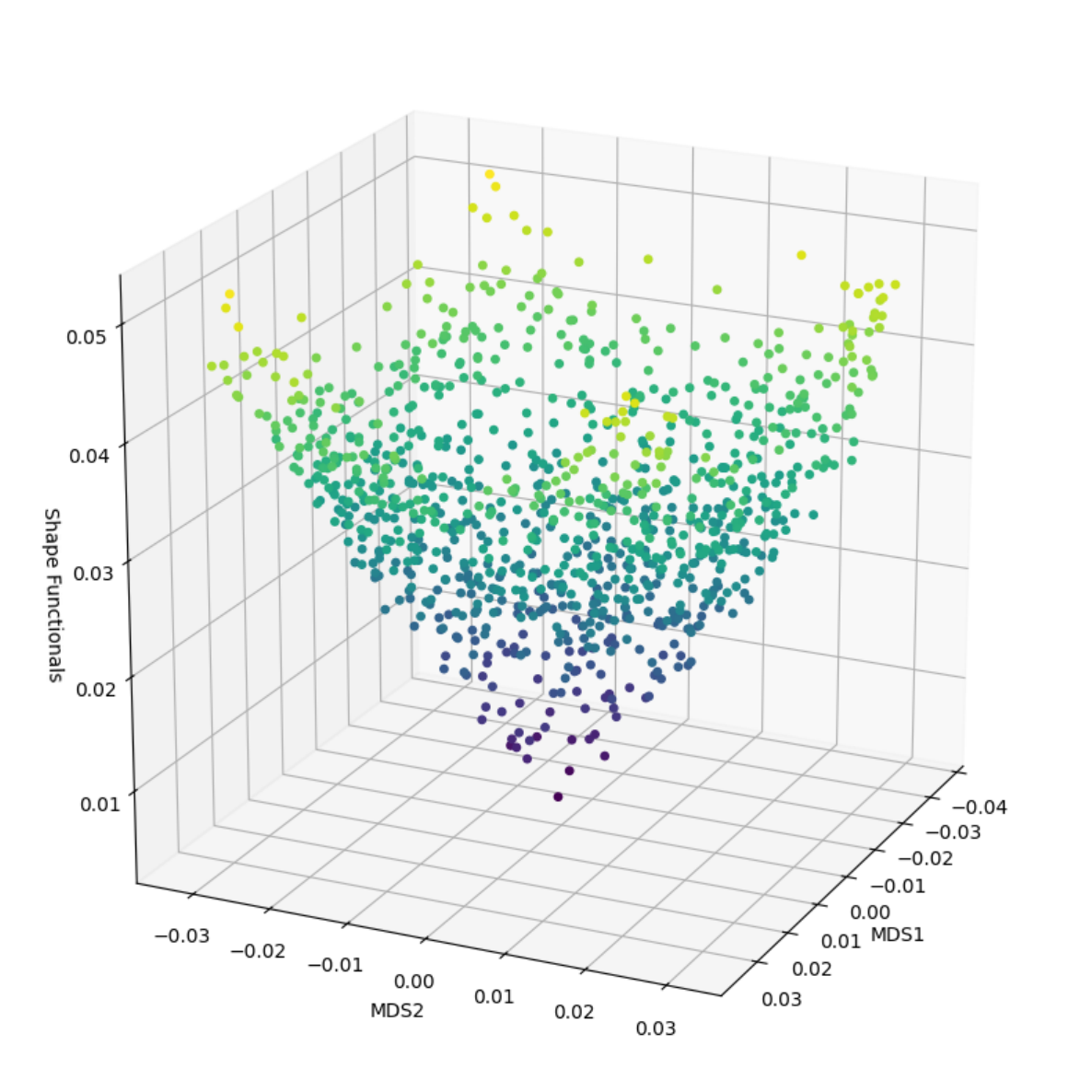}
        \subcaption{$\eps=0.1$}
        \label{shape_cone_eps=0.1}
      \end{minipage} &
      \begin{minipage}[t]{0.45\hsize}
        \centering
        \includegraphics[keepaspectratio, scale=0.25]{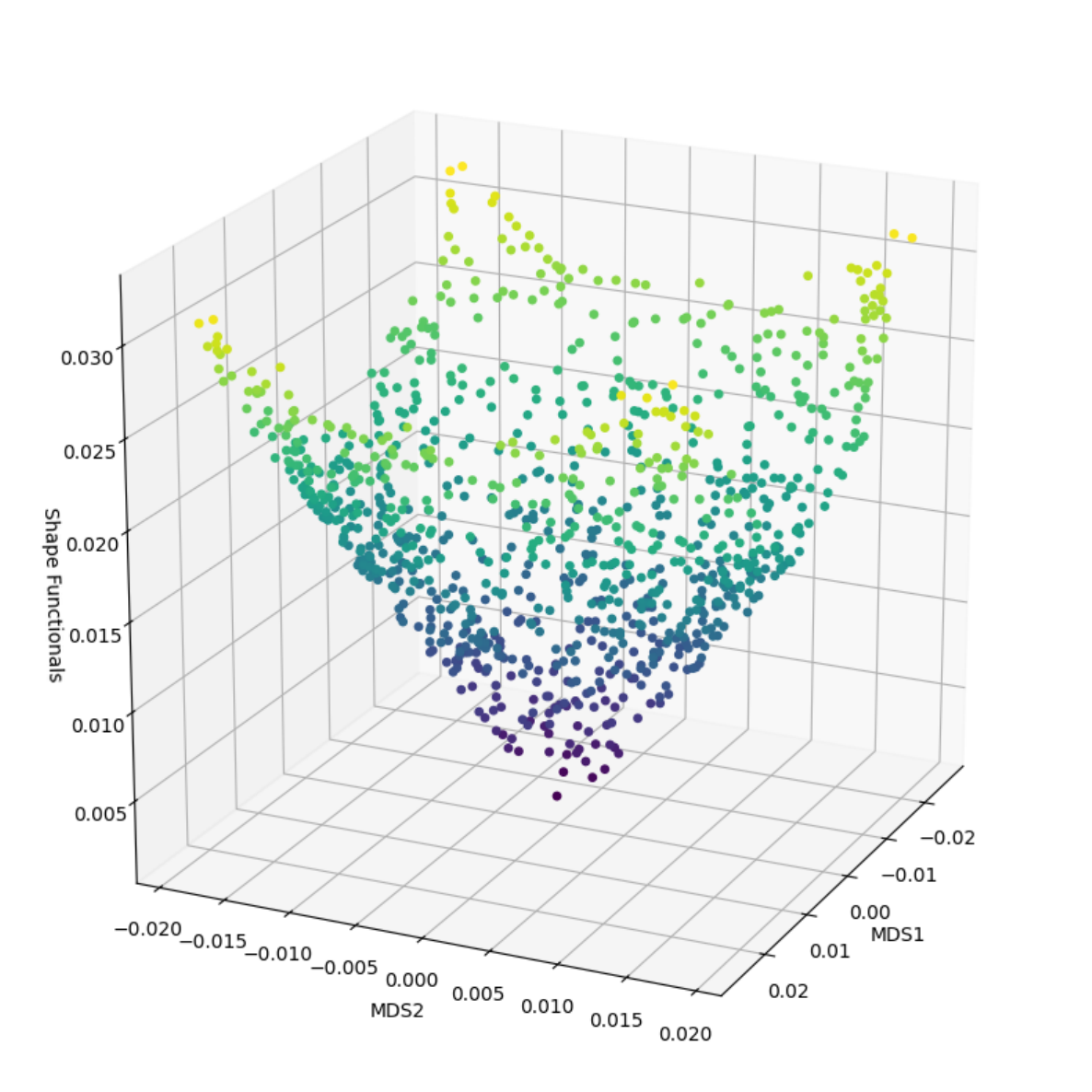}
        \subcaption{$\eps=1$}
        \label{shape_cone_eps=1}
      \end{minipage} 
    \end{tabular}
     \caption{Visualization of the Sinkhorn cone-type shape functionals using Sinkhorn MDS with parameters: $1000$ shapes, and Fourier parameters $\delta=0.06$.}
     \label{fig3}
\end{figure}

In particular, we established the error estimate for the embeddings obtained by Sinkhorn MDS compared to the case when $\eps=0$, including Wassmap \cite{Hamm2023}, as presented in Theorem \ref{Convergence rate of Sinkhorn MDS}. We also validated the effectiveness of Sinkhorn MDS through numerical experiments on shape optimization problems, including the classical Dido’s problem (Fig \ref{fig1}) and two newly introduced shape functionals: the double-well (Fig \ref{fig2}) and cone-type shape functionals (Fig \ref{fig3}). These experiments demonstrated that Sinkhorn MDS can accurately capture the complex structures of shape functionals and provide meaningful visualizations that reveal important geometric properties. 

Furthermore, we examined the effect of the regularization parameter $\varepsilon$ on the Sinkhorn cone-type shape functionals \eqref{Sinkhorn cone-type shape functional}. Notably, the numerical experiment (Fig \ref{fig3}) indicates that the shape functional becomes more aligned as the regularization parameter $\eps > 0$ increases. A mathematical proof of this phenomenon remains an open question for future research.

Overall, our results demonstrate that Sinkhorn MDS is an effective tool for shape analysis and visualization. This approach combines computational efficiency with theoretical rigor, making it suitable for various applications. Future research directions could include extending this method to more complex shape spaces, such as the space of knots, and exploring its application to practical problems in engineering such as topology optimization in elastic bodies.


\section*{Acknowledgments}
The first and second authors are partially supported by WPI-ASHBi at Kyoto University. This work was funded by JSPS Grant-in-Aid for Early-Career Scientists, 21K13822 (T.Y.), JST-Mirai Program, JPMJMI22G1 (J.O.), JSPS Grant-in-Aid for Transformative Research Areas (A), 23H04459 (L.C.) and JSPS Grant-in-Aid for Early-Career Scientists, 22K13935 (L.C.), JSPS Fund for the Promotion of Joint International Research (Fostering Joint International Research (B)) 21KK0044 (L.C.). 

\bibliographystyle{siam}
\bibliography{references}

\bigskip
\bigskip
\bigskip
\bigskip

\noindent
\textsc{Toshiaki Yachimura, \\ 
Mathematical Science Center for Co-creative Society, Tohoku University, 
Sendai 980-0845, Japan} \\
\noindent
{\em Electronic mail address:}
toshiaki.yachimura.a4@tohoku.ac.jp

\bigskip

\noindent
\textsc{Jun Okamoto, \\
Kyoto University Institute for Advanced Study, Sakyo-ku, Kyoto 606-8501, Japan}\\
\noindent
{\em Electronic mail address:}
okamoto.jun.8n@kyoto-u.ac.jp

\bigskip

\noindent
\textsc{Lorenzo Cavallina \\
Mathematical Institute, Tohoku University, Sendai 980-8578, Japan}\\
\noindent
{\em Electronic mail address:}
cavallina.lorenzo.e6@tohoku.ac.jp

\end{document}